\documentclass[11pt,leqno]{amsart}
\usepackage{amsthm,amsfonts,amssymb,amsmath,oldgerm}
\usepackage{graphicx}
\usepackage{makecell}
\numberwithin{equation}{section}
\usepackage{fullpage}

\usepackage{hyperref}
\allowdisplaybreaks[2]




\def\l{\lambda}




\def\l{\lambda}

\newcommand\br{\begin{remark}}
\newcommand\er{\end{remark}}
\newcommand\bp{\begin{pmatrix}}
\newcommand\ep{\end{pmatrix}}
\newcommand{\be}{\begin{equation}}
\newcommand{\ee}{\end{equation}}
\newcommand\ba{\begin{equation}\begin{aligned}}
\newcommand\ea{\end{aligned}\end{equation}}


\newcommand{\bap}{\begin{app}}
\newcommand{\eap}{\end{app}}
\newcommand{\begs}{\begin{exams}}
\newcommand{\eegs}{\end{exams}}
\newcommand{\beg}{\begin{example}}
\newcommand{\eeg}{\end{example}}
\newcommand{\bpr}{\begin{proposition}}
\newcommand{\epr}{\end{proposition}}
\newcommand{\bt}{\begin{theorem}}
\newcommand{\et}{\end{theorem}}
\newcommand{\bc}{\begin{corollary}}
\newcommand{\ec}{\end{corollary}}
\newcommand{\bl}{\begin{lemma}}
\newcommand{\el}{\end{lemma}}
\newcommand{\bd}{\begin{definition}}
\newcommand{\ed}{\end{definition}}
\newcommand{\brs}{\begin{remarks}}
\newcommand{\ers}{\end{remarks}}



\newcommand{\Id}{{\rm Id }}

\newtheorem{theorem}{Theorem}[section]
\newtheorem{proposition}[theorem]{Proposition}
\newtheorem{corollary}[theorem]{Corollary}
\newtheorem{lemma}[theorem]{Lemma}

\theoremstyle{remark}
\newtheorem{remark}[theorem]{Remark}
\theoremstyle{definition}
\newtheorem{definition}[theorem]{Definition}

\newtheorem{example}[theorem]{Example}












\newcommand{\beq}{\begin{equation}}
\newcommand{\eeq}{\end{equation}}




\title{Stability of strong detonation waves for Majda's model with general ignition functions}
\author{Soyeun Jung}
\address{Kongju National University}
\email{soyjung@kongju.ac.kr}
\thanks{Research of S.J. was supported by the National Research Foundation of Korea(NRF) grant funded by
the Korea government(MSIP) (No. 2019R1F1A1063018).}

\author{Zhao Yang}
\address{Indiana University, Bloomington, IN 47405, University of Illinois at Urbana-Champaign, IL 61801}
\email{yangzha@indiana.edu, zhaouiuc@illinois.edu}
\thanks{Research of Z.Y. was supported by the College of Arts and Sciences Dissertation Year Fellowship
of Indiana University, Bloomington}
\author{Kevin Zumbrun}
\address{Indiana University, Bloomington, IN 47405}
\email{kzumbrun@indiana.edu}
\thanks{Research of K.Z. was partially supported under NSF grant no. DMS-1400555}
\begin{document}

\begin{abstract}
For strong detonation waves of the inviscid Majda model, spectral stability was established 
by Jung and Yao for waves with step-type ignition functions, by a proof based largely on 
explicit knowledge of wave profiles. 
In the present work, we extend their stability results to strong detonation waves with more
general ignition functions where explicit profiles are unknown. 
Our proof is based on reduction to a generalized Sturm-Liouville problem, 
similar to that used by Sukhtayev, Yang, and Zumbrun to study
spectral stability of hydraulic shock profiles of the Saint-Venant equations.
\end{abstract}

\date{\today}
\maketitle

\section{Introduction}\label{s:intro}
In this paper, building on methods of \cite{SYZ,SZ}, we study spectral stability of strong detonation waves \cite{BZ} of inviscid Majda's model \cite{M}
\ba \label{detonation}
u_t+\left(\frac{u^2}{2}\right)_x&=kq\phi(u)z, \\
z_t&=-k\phi(u)z. 
\ea
Here, $u\geq 0$ is a lumped variable modeling the gas-dynamical quantities of density, momentum, energy and temperature, $z \geq 0$ is the mass fraction of the reactant, $q\geq 0$ is a fixed coefficient of heat release of the reaction, and $\phi(u)$ is a ``general" ignition function satisfying 
\be \label{ignitionfunction}
\text{$\phi(u)=0$ for $u\leq u_i$, and $\phi(u)>0$ for $u>u_i$,}
\ee
where $u_i>0$ is called the ignition level. 

\smallskip

A strong detonation wave of \eqref{detonation} is a traveling wave solution of the form
\be\label{sdw}
(u, z)(x,t)=(\bar u, \bar z)(x-st), \quad \lim_{\xi \rightarrow \pm \infty} (\bar u, \bar z)(\xi)=(u_{\pm}, z_{\pm})
\ee
where $(\bar{u},\bar{z})(\xi)$ is the profile function and is smooth except at a single shock discontinuity at (without loss of generality) $\xi=0$. 
At this discontinuity, known as a ``Neumann shock", $\bar{u}$ jumps from $u_*:=\bar u(0^-)$ 
to $\bar u(0^+)$ as $\xi$ crosses zero from left to right and
the limiting states $u_{\pm}, z_{\pm}$ satisfy 
\be
z_-=0, \quad z_+=1, \quad u_+ < u_i < u_-,  \quad \text{and} \quad  u_+<s<u_-. 
\ee
At the shock $\xi=0$, the Rankine-Hugoniot condition associated with \eqref{detonation} reads
\be\label{RHC}
\left[\left(\begin{array}{c} \bar{u}^2/2-s\bar{u} \\ -s\bar{z} \end{array}\right)\right]=\left(\begin{array}{c} 0\\0\end{array}\right),
\ee 
where $[\cdot]:=\cdot|_{0^+}-\cdot|_{0^-}$ denotes jump in $\cdot$ across $\xi=0$, which yields
\be
\bar{z}(0^-)=z_+=1,\quad u_*+u_+=2s.
\ee 
See \cite{BZ, M, Z1,Z2} for further discussion.
\smallskip

It is shown in \cite{Er1,Er2,Er3,JLW,Z1,Z2} that spectral stability of detonation waves may be determined by examination of the Evans-Lopatinsky determinant $\Delta(\lambda)$ \eqref{Lopatinsky} (defined below). 
The determinant is a stability function which is analytic in the right half complex plane,
and for which absence of roots in the right half plane (save for a single ``translational'' zero
eigenvalue at the origin) is defined as spectral stability. 
Thus, the main purpose of this paper is to seek conditions needed for the general ignition functions \eqref{ignitionfunction} such that the following statement holds:
\be \label{SS condition} \tag{D}
\text{Except for a simple root at $\lambda=0$, $\Delta(\lambda)$ \eqref{Lopatinsky} has no roots in $\{ \Re \l \geq 0 \}$.}  
\ee
\smallskip

For a simple step-type ignition function $\phi(u)$ which is equal to zero for $u<u_i$ and one for $u>u_i$, 
the above condition \eqref{SS condition} has been verified in \cite{JY} by direct calculation of the 
Evans-Lopatinsky determinant $\Delta(\lambda)$. Also, in \cite{BZ}, the authors have presented a 
systematic numerical investigation of the Evans-Lopatinsky determinant with Arrehenius-type ignition 
functions. However, as far as we know, spectral stability has not been verified analytically for 
general ignition functions other than step-type. We are motivated by the recent 
approach of Sukhtayev, Yang, and Zumbrun \cite{SYZ} for investigating spectral stability of hydraulic 
shock profiles. Utilizing that framework here, we obtain the main result Theorem \ref{main}.

\section{Rescaling and construction of strong detonation waves}

We now briefly review the construction of strong detonation waves in \cite{BZ, JY}. Introducing 
the change of coordinates
\be 
\tilde{x}=\frac{x}{s},\quad \tilde{t}=t,\quad \tilde{u}=\frac{u-u_+}{s-u_+},\quad \tilde{z}=z,\quad \tilde{q}=\frac{q}{s-u_+},\quad \tilde{\phi}(\tilde{u})=k\phi(u),
\ee 
equations \eqref{detonation} become
\ba 
\label{detonation1}
\tilde{u}_{\tilde{t}}+\left(\omega\frac{\tilde{u}^2}{2}+(1-\omega)\tilde{u}\right)_{\tilde{x}}&=\tilde{q}\tilde{\phi}(\tilde{u})\tilde{z},\\ \tilde{z}_{\tilde{t}}&=-\tilde{\phi}(\tilde{u})\tilde{z}
\ea
where $\omega=\frac{s-u_+}{s}\in (0,1]$. In the new coordinates, we fix the traveling waves speed $s$ to be $1$ and $\tilde{u}_+$ to be $0$. Furthermore, we have
\be 
\tilde{z}_-=0,\quad \tilde{z}_+=1, \quad \text{and $\tilde{u}_*=\frac{u_*-u_+}{s-u_+}=2$.}
\ee
From now on, we work with \eqref{detonation1}, dropping tildes for ease of writing. 
Assume that the profile $(\bar{u},\bar{z})(\xi)$ is smooth on $\xi \gtrless 0$ with a single discontinuity at $\xi =0$. 
On the $\xi>0$ part, assume that the system holds at a quiescent (i.e. nonreacting) constant state:
\be 
(\bar{u},\bar{z})(\xi)\equiv (u_+, z_+)=(0,1), \text{ for $\xi>0$.} \ee 
At the shock $\xi=0$, our former analysis yields \be 
\bar{u}(0^-)=u_*=2,\quad \bar{z}(0^-)=1.\ee
On the $\xi<0$ part, plugging the ansatz \eqref{sdw} into \eqref{detonation1} with $s=1$, the profile ODE reads
\be 
\label{profileODE}
\omega\left(\frac{1}{2}\bar{u}^2-\bar{u}\right)'=\phi(\bar{u})q\bar{z},\quad \bar{z}'=\phi(\bar{u})\bar{z}.
\ee
Subtracting $q$ times the second equation of \eqref{profileODE} from the first equation of \eqref{profileODE} yields
\be 
\left(\frac{1}{2}\omega\bar{u}^2-\omega\bar{u}-q\bar{z}\right)'=0.
\ee 
Hence, for $\xi<0$, the quantity $\omega\bar{u}^2(\xi)/2-\omega\bar{u}(\xi)-q\bar{z}(\xi)$ is 
equal to a constant $$\frac{1}{2}\omega u_*^2-\omega u_*-q\bar{z}(0^-)=-q,
$$
yielding
\be 
\bar{u}(\xi)=1+\sqrt{1-2q(1-\bar{z}(\xi))/\omega},\quad \xi<0.
\ee 

The profile ODE \eqref{profileODE} thus reduces to the scalar ODE
\be 
\label{profileODE2}
\bar{z}'=\phi\left(1+\sqrt{1-2q(1-\bar{z})/\omega}\right)\bar{z}
\ee 
with initial condition $\bar{z}(0^-)=1$. A sufficient condition for existence of monotone increasing solution to \eqref{profileODE2} is the ignition level condition
\be 
\label{con_existprofile}
u_i<u_-.
\ee
\section{the Eigenvalue system and Evans-Lopatinsky determinant}\label{Eigenvalue system and Lopatinsky determinant}

In this section, we provide a concise derivation of the Evans-Lopatinsky determinant. For a detailed derivation, see \cite{YZ} and the references therein. Linearizing \eqref{detonation1} and its Rankine-Hugoniot condition about a detonation wave and performing Laplace transform to the linearized equations in ``good unknown" \cite{YZ,JLW,Z1,Z2}, we obtain the following eigenvalue problem
\ba  \label{EE}
&\partial_\xi(Av)=(E-\lambda\Id)v, \quad \xi \gtrless 0, &\text{interior equation,}\\
&\eta [\lambda\overline{W}-R(\overline{W})]=[Av],&\text{boundary condition,}
\ea
where $v$ is the Laplace transform of the perturbation in ``good unknown", the scalar $\eta$ is the Laplace transform of shock location, $\Id$ is an identity matrix,
\be 
A=\left[\begin{array}{cc} \omega(\bar{u}-1) & 0\\ 0 & -1 \end{array}\right],\quad
E=\left[\begin{array}{cc} q\bar{z}\phi_u(\bar{u}) & q\phi\left(\bar{u}\right)\\ -\bar{z} \phi_u \left(\bar{u}\right) & -\phi \left(\bar{u}\right) \end{array}\right],\quad \overline{W}=\left[\begin{array}{c} \bar{u}\\\bar{z} \end{array}\right],\quad R(\overline{W})=\left[\begin{array}{c} q\phi(\bar{u})\bar{z}\\-\phi(\bar{u})\bar{z}\end{array}\right],
\ee 
and $[\cdot]:=\cdot|_{0^+}-\cdot|_{0^-}$ denotes jump in $\cdot$ across $\xi=0$. With $(\bar{u},\bar{z})(\xi)$ holding at quiescent state (0,1) on $\xi>0$ part, the interior equation of \eqref{EE} readily becomes $\omega \partial_\xi v_1=\lambda v_1$, $ \partial_\xi v_2=\lambda v_2$. For $\Re\lambda\ge 0$, the trivial solution $v(\xi)=0$ is then the only $L^2$-solution on $\xi>0$ part. Therefore, we can reduce the eigenvalue problem \eqref{EE} to
\ba  \label{EE1}
&\partial_\xi(Av)=(E-\lambda\Id)v, \quad \xi < 0, &\text{interior equation,}\\
&\eta [\lambda\overline{W}-R(\overline{W})]=A(0^-)v(0^-),&\text{boundary condition.}
\ea
Furthermore, we find the limiting matrix of \eqref{EE1} 
$$A^{-1}(-\infty)\left(E(-\infty)-\lambda Id\right)=\left[\begin{array}{rr}-\lambda/\sqrt{\omega^2-2q\omega}&\phi(u_-)q/\sqrt{\omega^2-2q\omega}\\0&\lambda+\phi(u_-)\end{array}\right]$$
always has a positive real part eigenvalue and a negative real part eigenvalue for $\Re\lambda>0$. Hence, there is one decaying mode and one growing mode as $\xi\rightarrow-\infty$ of the interior equation \eqref{EE1}.

We may reformulate the boundary condition of \eqref{EE1} as the following Evans-Lopatinsky determinant.
\begin{definition}Corresponding to a strong detonation profile $\overline{W}=(\bar{u},\bar{z})^T$, we define its {\it Evans-Lopatinsky determinant} \cite{Er1,Er2,Er3,JLW,Z1,Z2} as
\be 
\label{Lopatinsky}
\Delta(\lambda)=\det\left(\left[\begin{array}{rr}[\lambda\overline{W}-R(\overline{W})] & A(0^-)v(0^-)\end{array}\right]\right)
\ee 
where $v$ is a decaying mode of the interior equation \eqref{EE1}.
\end{definition}
\begin{definition} We say a strong detonation wave is 
spectrally stable if there holds condition \eqref{SS condition}.
\end{definition}
\section{Spectral stability of strong detonation waves}\label{Spectral stability of strong detonation waves}

In this section, we prove the condition \eqref{SS condition} for ignition functions \eqref{ignitionfunction} satisfying condition \eqref{ignitioncon} below. As we mentioned in the introduction, we will perform the reduction scheme established in \cite{SYZ} for the eigenvalue problem \eqref{EE1}. 
We then extend the spectral stability result for step-type ignition functions in \cite{JY} to the case 
of ignition functions satisfying \eqref{ignitionfunction}, using a homotopy argument. 
We begin with the following lemma to show that $\l=0$ is a simple root of $\eqref{Lopatinsky}$. 

\begin{lemma}
\label{simpleroot}
$\lambda=0$ is a simple root of the Evans-Lopatinsky determinant \eqref{Lopatinsky} 
	if and only if $\phi(u_*)=\phi(2)\neq 0$, in particular under assumptions
	\eqref{ignitionfunction} and \eqref{con_existprofile}.
\end{lemma}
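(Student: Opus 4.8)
The plan is to compute the Evans–Lopatinsky determinant $\Delta(\lambda)$ explicitly in a neighborhood of $\lambda=0$ and show that $\Delta(0)=0$ while $\Delta'(0)\neq 0$ precisely when $\phi(2)\neq 0$. First I would verify that $\lambda=0$ is always a root: at $\lambda=0$ the derivative of the wave profile itself, $(\bar u', \bar z')^T$, solves the interior equation \eqref{EE1} with $\lambda=0$ (differentiating the profile ODE \eqref{profileODE} in $\xi$), and decays as $\xi\to-\infty$ since the profile approaches its constant limit $(u_-,z_-)$. Thus $\overline{W}'=(\bar u',\bar z')^T$ furnishes the decaying mode $v$ at $\lambda=0$, and $A(0^-)v(0^-) = A(0^-)\overline{W}'(0^-) = (E(0^-)\overline{W}'(0^-))$ via the profile equation evaluated at $0^-$, which by the Rankine–Hugoniot structure is parallel to the jump vector $[\lambda \overline{W}-R(\overline{W})]\big|_{\lambda=0}=-[R(\overline{W})]$. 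Collinearity of the two columns forces $\Delta(0)=0$.

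Next, to establish simplicity, I would differentiate $\Delta(\lambda)$ at $\lambda=0$. Writing the two columns as $C_1(\lambda)=[\lambda\overline{W}-R(\overline{W})]$ and $C_2(\lambda)=A(0^-)v(\lambda,0^-)$, the derivative of the $2\times 2$ determinant is
\be
\Delta'(0)=\det\big[\,C_1'(0)\ \ C_2(0)\,\big]+\det\big[\,C_1(0)\ \ C_2'(0)\,\big].
\ee
Since $C_1(0)$ and $C_2(0)$ are parallel, and $C_1'(0)=\overline{W}\big|_{0^-}-\overline{W}\big|_{0^+}$ is the jump in the profile (which is explicitly $(2,1)^T-(0,1)^T=(2,0)^T$ from the normalization in Section 2), the first term is computable directly. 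The key point is that $\phi(2)\neq 0$ controls whether the mode $v(0^-)$ is genuinely nonzero and whether $C_2(0)$ is nonzero: if $\phi(u_*)=\phi(2)=0$, then the matrix $E(0^-)$ degenerates (its entries all carry factors of $\phi$ or $\phi_u$ evaluated at $u_*=2$), the profile derivative $\overline{W}'(0^-)$ vanishes through \eqref{profileODE}, and both columns collapse so that $\Delta$ acquires a root of higher order.

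I would then evaluate $C_2'(0)$ by differentiating the decaying solution $v(\lambda,\xi)$ in $\lambda$; this is the standard variation-of-parameters computation, where $\partial_\lambda v$ at $\lambda=0$ solves the inhomogeneous interior equation with forcing $-v$ and inherits the decaying boundary behavior at $-\infty$. Combining the two determinant terms yields an explicit nonzero multiple of $\phi(2)$ (times geometric factors like $\omega$ and $q$ that are strictly positive), so $\Delta'(0)\neq 0$ iff $\phi(2)\neq 0$. Finally, under \eqref{ignitionfunction} and \eqref{con_existprofile} we have $u_*=2$ and, since $u_+ < u_i < u_-$ with the profile passing through $u_*=u_-$-side values, $u_*=2>u_i$ so $\phi(2)>0$, giving the stated conclusion.

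The main obstacle I expect is the bookkeeping in $C_2'(0)$: one must carefully track the $\lambda$-dependence of the decaying eigenmode near the degenerate point where the two limiting eigenvalues of $A^{-1}(-\infty)(E(-\infty)-\lambda\,\mathrm{Id})$ are both well-separated but the mode's normalization at $\xi=0^-$ must be chosen consistently. In particular, showing that the contribution of $\det[C_1(0)\ C_2'(0)]$ does not spuriously cancel the contribution of $\det[C_1'(0)\ C_2(0)]$ — i.e., that the two pieces do not conspire to annihilate the $\phi(2)$ factor — is the delicate step, and will likely require exploiting the specific block structure of $A$ and $E$ rather than a generic argument.
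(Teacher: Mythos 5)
Your skeleton coincides with the paper's own proof: take $v=\overline{W}'$ as the decaying zero-mode so that both columns of \eqref{Lopatinsky} equal $R(\overline{W}(0^-))$, giving $\Delta(0)=0$; then expand $\Delta_\lambda(0)$ by the Leibniz rule into the two determinants $\det[C_1'(0)\ \ C_2(0)]+\det[C_1(0)\ \ C_2'(0)]$. However, the decisive step is exactly the one you defer: you assert that combining the two terms "yields an explicit nonzero multiple of $\phi(2)$," and then concede in your last paragraph that ruling out cancellation between the two terms is delicate and unresolved. Since simplicity of the root \emph{is} the statement that this sum is nonzero, the proposal as written does not prove the lemma. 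The paper closes this gap not by a structural or genericity argument but by integrating the second term in closed form: setting $\tilde v := (Av_\lambda)_1 + q\,(Av_\lambda)_2$ and using that the first row of $E$ plus $q$ times its second row vanishes identically (the same conserved-quantity structure used to integrate the profile ODE \eqref{profileODE}), the $\lambda$-differentiated interior equation collapses to $\tilde v' = (-\bar u - q\bar z)'$; integrating from $-\infty$, where $v_\lambda$ decays, gives $\tilde v(0^-) = u_- - 2 - q = \sqrt{1-2q/\omega}-1-q$, whence
\begin{equation*}
\Delta_\lambda(0) \;=\; 2\phi(2) + \phi(2)\,\tilde v(0^-) \;=\; \bigl(1-q+\sqrt{1-2q/\omega}\,\bigr)\phi(2),
\end{equation*}
whose prefactor is strictly positive because existence of the profile forces $q\le \omega/2\le 1/2$, hence $1-q>0$. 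This closed-form evaluation is the missing idea; without it (or some substitute), your argument stops precisely at the point where the lemma's content lies.

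Two smaller issues. First, with the paper's jump convention $[\,\cdot\,]=\cdot|_{0^+}-\cdot|_{0^-}$ one has $C_1'(0)=[\overline{W}]=(-2,0)^T$, not $(2,0)^T$ as you wrote; since the whole cancellation question hinges on the \emph{relative} sign of the two Leibniz terms, this must be kept straight. Second, your "only if" reasoning (if $\phi(2)=0$ then both columns vanish and the root has higher order) is correct as a formal statement, but note that under \eqref{ignitionfunction} and \eqref{con_existprofile} this case cannot occur for an actual strong detonation profile: $\phi(2)=0$ would make $(2,1)$ an equilibrium of \eqref{profileODE}, forcing $\overline{W}\equiv(2,1)$ on $\xi<0$ and contradicting $\bar z(-\infty)=0$. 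The paper gets both directions of the equivalence at once from the displayed formula for $\Delta_\lambda(0)$, which is cleaner than arguing the degenerate case separately.
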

\begin{proof}
Setting $\lambda=0$, the interior equation becomes $(Av)'=Ev$. The eigenvalues of $E(-\infty)A^{-1}(-\infty)$ are $0$ and $\phi(u_-)>0$. Therefore, the decaying manifold as $\xi \rightarrow-\infty$ is one dimensional. 
Taking without loss of generality $v=\overline{W}'$, we thus have
\be 
\Delta(0)=\det\left(\left[\begin{array}{rr}R(\overline{W}(0^-)) & A(0^-)\overline{W}'(0^-)\end{array}\right]\right)=\det\left(\left[\begin{array}{rr}R(\overline{W}(0^-)) & R(\overline{W}(0^-)) \end{array}\right]\right)=0. 
\ee 
To check simplicity of the root, it suffices to show $\Delta_\lambda(0)\neq 0$. 
Differentiating \eqref{Lopatinsky} and setting $\lambda=0$ yields
\ba 
\Delta_\lambda(0)&=\det\left(\left[\begin{array}{rr}[\overline{W}] & R(0^-)\end{array}\right]\right)+\det\left(\left[\begin{array}{rr}[-R(\overline{W})] & A(0^-)v_\lambda(0^-)\end{array}\right]\right)\\
&=\det\left(\left[\begin{array}{rr}-2 & q\phi(2)\\ 0& -\phi(2)\end{array}\right]\right)+\det\left(\left[\begin{array}{rr}q\phi(2) & \big(A(0^-)v_\lambda(0^-)\big)_1\\ -\phi(2) & \big(A(0^-)v_\lambda(0^-)\big)_2\end{array}\right]\right)\\
&=2\phi(2)+\left(q\big(A(0^-)v_\lambda(0^-)\big)_2+\big(A(0^-)v_\lambda(0^-)\big)_1\right)\phi(2).
\ea 
A simple calculation shows that $\tilde{v}(\xi):=q\big(A(\xi)v_\lambda(\xi)\big)_2+\big(A(\xi)v_\lambda(\xi)\big)_1$ satisfies 
\be 
\tilde{v}'=(-q\bar{z}-\bar{u})',\quad \text{for $\xi<0$}.
\ee 
Integrating from $-\infty$ to $\xi<0$ yields \be \tilde{v}(\xi)=(-q\bar{z}-\bar{u})(\xi)-(-q\bar{z}-\bar{u})(-\infty).\ee  Therefore, $\Delta_\lambda(0)=2\phi(2)+\big(-q-1+\sqrt{1-2q/\omega}\big)\phi(2)=\big(1-q+\sqrt{1-2q/\omega}\big)\phi(2)\neq 0$ provided that $\phi(2)\neq 0$. 
However, the condition $\phi(2)\neq 0$ is negligible under \eqref{ignitionfunction}, 
\eqref{con_existprofile}, since then $\phi(u)>0$ for $u>u_i$ and $u_i<u_-<2$. 
\end{proof}

\smallskip

We now prove that $\Delta(\l) \neq 0$ for a pure imaginary eigenvalue $\l$. Following the reduction scheme in \cite{SYZ} section 2.2 and choosing
$$
T_1=\left[\begin{array}{cc} 1 & q\\ -\frac{1}{q} & 0 \end{array}\right],\quad T_2=\left[\begin{array}{cc} 1 & 0\\ -\frac{\omega(1-\bar{u})}{q} & 1 \end{array}\right],
$$
the new variable $u:=T_2^{-1}v$ satisfies
\be 
\label{ueq}
\left[\begin{array}{cc} 0 & -q\\ \frac{\omega(1-\bar{u})}{q} & 0 \end{array}\right]\left[\begin{array}{c} u_1\\ u_2 \end{array}\right]'=\left[\begin{array}{cc} \lambda (\omega(1-\bar{u})-1)& -\lambda q\\ \frac{\lambda+\omega\bar{u}_x+\omega\phi(\bar{u}) -\omega\bar{u}\phi(\bar{u}) -\phi_u(\bar{u})q\bar{z}}{q} & -\phi(\bar{u})  \end{array}\right]\left[\begin{array}{c} u_1\\ u_2 \end{array}\right].
\ee 
Solving for $u_1$ by the first equation of \eqref{ueq} and plugging it in the second equation of \eqref{ueq} yields a second order scalar ODE
\be
u_2''+(f_1\lambda +f_2) u_2'+(f_3\lambda^2+f_4\lambda)u_2=0
\ee
where 
\ba 
f_1&=-\frac{\omega\bar{u}-1-\omega}{\omega(\bar{u}-1)} ,\quad\quad f_3=-\frac{1}{\omega(\bar{u}-1)},\\
f_2&=-\frac{\omega ^2\phi(\bar{u}) -\omega \bar{u}_x-\omega \phi(\bar{u}) -2\omega ^2\phi(\bar{u}) \bar{u}+\omega ^2\phi(\bar{u}) \bar{u}^2+\phi_u(\bar{u})q\bar{z}+\omega \phi(\bar{u}) \bar{u}-\phi_u(\bar{u})\omega q\bar{z}+\phi_u(\bar{u})\omega q\bar{u}\bar{z}}{\omega \left(\bar{u}-1\right)\left(\omega \bar{u}-\omega +1\right)},\\
f_4&=-\frac{\phi(\bar{u}) -\omega \phi(\bar{u}) +\omega \bar{u}_x-\phi_u(\bar{u})q\bar{z}+\omega \phi(\bar{u}) \bar{u}+\phi_u(\bar{u})\omega q\bar{z}-\phi_u(\bar{u})\omega q\bar{u}\bar{z}}{\omega \left(\bar{u}-1\right)\left(\omega \bar{u}-\omega +1\right)}.
\ea

After a Liouville-type transformation, we have 
$$
w(\lambda,\xi)=e^{\frac{1}{2}\int_0^{\xi} f_1(y)\lambda+f_2(y)dy}u_2(\lambda, \xi)
$$
which gives
\be 
\label{weq}
w''+\left(\left(f_3-\frac{1}{4}f_1^2\right)\lambda^2+\left(f_4-\frac{1}{2}f_1f_2-\frac{1}{2}f_1'\right)\lambda-\frac{1}{4}f_2^2-\frac{1}{2}f_2'\right)w=0. 
\ee 
Noting, as in \cite{SYZ}, that the limiting constant-coefficient equation associated with \eqref{weq} as
$\xi\to -\infty$ has eigenvalues that are negatives of each other, yet at the same time are constant real
shifts of the eigenvalues associated with the limiting version of the original system in $u$ coordinates,
which are known to have real parts of different signs for $\Re \lambda \geq 0$, we readily find that
on $\Re\lambda\geq 0$, bounded solutions of \eqref{weq} are in one-to-one correspondence with bounded
solutions of the original system, and exponentially decaying in $w$ coordinates.
This confirms that zeros of the Evans-Lopatinsky determinant for the original system correspond to
exponentially decaying eigenfunctions of \eqref{weq}, which we now investigate.

In $w$ coordinate, after substituting $$\bar{u}(0^+)=0,\quad \bar{z}(0^+)=1,\quad \phi(0)=0,\quad \bar{u}(0^-)=2,\quad \bar{z}(0^-)=1,\quad \bar{u}_{\xi}(0^-)=\phi(2)q/\omega,$$ the Evans-Lopatinsky condition \eqref{Lopatinsky} $\delta(\lambda)=0$ gives boundary condition
\be 
\label{bdcondition}
w'(0^-)=-\left(\lambda\frac{\omega+1}{2\omega} +\frac{\phi_u(2)q+\omega \phi(2) -2\phi(2) q+\omega ^2\phi(2) -\omega ^2\phi(2) q+\phi_u(2)\omega q-2\omega \phi(2) q}{{2\left(\omega +1\right)\omega}}\right)w(0^-)
\ee 
Taking the $L^2$ inner product of $w$ with \eqref{weq} on the half line $\xi< 0$ yields
\ba 
\label{L2equation}
&\bar{w}(0)\cdot w'(0)-\langle w',w'\rangle\\ &+\Big\langle w,\left(\left(f_3-\frac{1}{4}f_1^2\right)\lambda^2+\left(f_4-\frac{1}{2}f_1f_2-\frac{1}{2}f_1'\right)\lambda-\frac{1}{4}f_2^2-\frac{1}{2}f_2'\right)w\Big\rangle=0.
\ea  
Equations \eqref{bdcondition} and \eqref{L2equation} yield the following lemma. 

\begin{lemma}
\label{nopureim}
The system \eqref{L2equation} has no nonzero pure imaginary eigenvalue for ignition functions satisfying
\be 
\label{ignitioncon}
\frac{d}{du}\ln(\phi(u))\le\frac{2\omega  \left(u-1\right)}{\omega u^2-2\omega u+2q},
\ee
for $1+\sqrt{1-2q/\omega}<u\le 2$. 
\end{lemma}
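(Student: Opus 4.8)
The plan is to argue by contradiction. Suppose $\lambda=i\tau$ with $\tau\in\RR\setminus\{0\}$ is an eigenvalue, with associated nontrivial exponentially decaying solution $w$ of \eqref{weq} on $\xi<0$ satisfying the boundary condition \eqref{bdcondition}. I would start from \eqref{L2equation}, which is nothing but the identity $\langle w,w''\rangle=\bar w(0)w'(0)-\langle w',w'\rangle$ paired against \eqref{weq}; since $w,w'$ decay at $-\infty$, the only surviving boundary contribution is $\bar w(0)w'(0)$. Substituting \eqref{bdcondition}, written as $w'(0^-)=-(\alpha\lambda+\beta)w(0^-)$ with $\alpha=\tfrac{\omega+1}{2\omega}>0$ and $\beta\in\RR$, turns this term into $-(\alpha\lambda+\beta)|w(0)|^2$. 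Because $f_1,\dots,f_4$ are real-valued, every spatial integral in \eqref{L2equation} is real, so setting $\lambda=i\tau$ and extracting the imaginary part collapses the whole identity (after dividing by $\tau\neq0$) to the single relation
\[
\int_{-\infty}^0\Big(f_4-\tfrac12 f_1f_2-\tfrac12 f_1'\Big)\,|w|^2\,\dif\xi=\alpha\,|w(0)|^2 .
\]
The real part, which involves $\langle w',w'\rangle$ and the coefficients $f_3-\tfrac14f_1^2$ and $-\tfrac14f_2^2-\tfrac12f_2'$, will not be needed.

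The crux is to show that condition \eqref{ignitioncon} is exactly what forces $a_1:=f_4-\tfrac12 f_1f_2-\tfrac12 f_1'\le0$ along the profile, so the left-hand side above is $\le0$ while the right-hand side is $\ge0$. To this end I would first record the profile relations for $\xi<0$: from \eqref{profileODE}, $\bar u'=\phi(\bar u)q\bar z/(\omega(\bar u-1))$, while the conserved quantity $\omega\bar u^2/2-\omega\bar u-q\bar z\equiv-q$ yields the identity $\omega\bar u^2-2\omega\bar u+2q=2q\bar z$, which is precisely the denominator in \eqref{ignitioncon}. Writing $D=\bar u-1>0$ and $\Omega=\omega(\bar u-1)+1>0$, one simplifies $f_1=-1+\tfrac1{\omega D}$, $f_3=-\tfrac1{\omega D}$, and, grouping the $\phi$- and $\phi_u$-terms, obtains compact forms of $f_2$ and $f_4$ in which every $\bar u'$-contribution telescopes against $-\tfrac12 f_1'=\tfrac{\bar u'}{2\omega D^2}$. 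The upshot I expect is the one-line formula
\[
a_1=-\frac{\omega(\bar u-1)+1}{2\omega(\bar u-1)}\Big(\phi(\bar u)-\frac{d}{d\xi}\ln\phi(\bar u)\Big).
\]
Since $\tfrac{d}{d\xi}\ln\phi(\bar u)=\tfrac{\phi_u}{\phi}\bar u'=\tfrac{\phi_u q\bar z}{\omega(\bar u-1)}$, the inequality $\phi(\bar u)-\tfrac{d}{d\xi}\ln\phi(\bar u)\ge0$ is equivalent, after clearing the positive factors $\phi$, $q\bar z$, $\omega(\bar u-1)$ and using $\omega\bar u^2-2\omega\bar u+2q=2q\bar z$, to exactly \eqref{ignitioncon}. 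Note $\phi(\bar u)>0$ throughout $\xi<0$ because $\bar u\ge u_->u_i$ under \eqref{con_existprofile}, so no division is illegitimate and $a_1\le0$ pointwise.

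With $a_1\le0$ in hand, the displayed imaginary-part relation has left side $\le0$ and right side $\ge0$, hence both vanish; as $\alpha>0$ this forces $w(0^-)=0$. The boundary condition \eqref{bdcondition} then gives $w'(0^-)=-(\alpha\lambda+\beta)w(0^-)=0$ as well, so $w$ solves the second-order linear ODE \eqref{weq} with zero Cauchy data at $\xi=0^-$; by uniqueness $w\equiv0$, contradicting nontriviality. This rules out nonzero purely imaginary eigenvalues. I expect the only real obstacle to be computational: carrying out the cancellations that reduce $a_1$ to the boxed one-line expression. The two structural facts that make it work are the telescoping of the $\bar u'$-terms (the $-\tfrac12 f_1'$ contribution exactly cancelling the term $\bar u'/(D\Omega)$ inherited from $f_2$ and $f_4$) and the profile identity $\omega\bar u^2-2\omega\bar u+2q=2q\bar z$, which is what matches the algebraic bound in \eqref{ignitioncon} with the sign of $a_1$.
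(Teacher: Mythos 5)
Your proposal is correct and follows essentially the same route as the paper: take the imaginary part of the energy identity \eqref{L2equation} with the boundary condition \eqref{bdcondition}, show that \eqref{ignitioncon} is equivalent (via the profile identity $\omega\bar u^2-2\omega\bar u+2q=2q\bar z$) to the pointwise sign condition $f_4-\tfrac12 f_1f_2-\tfrac12 f_1'\le 0$, and conclude $w(0)=w'(0)=0$, hence $w\equiv 0$ by Cauchy uniqueness. Your ``one-line formula'' for $a_1$ is algebraically identical to the paper's displayed expression for $f_4-\tfrac12 f_1f_2-\tfrac12 f_1'$, so the only difference is cosmetic packaging of the same computation.
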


\begin{proof}
Substituting $\lambda=ia$, $a \neq 0$, and \eqref{bdcondition} into equation \eqref{L2equation}, 
and taking the imaginary part gives
\be \label{impart}
a\left(-\bar{w}(0)\cdot w(0)\frac{\omega+1}{2\omega}+\Big\langle w,\left(f_4-\frac{1}{2}f_1f_2-\frac{1}{2}f_1'\right)w\Big\rangle\right)=0,
\ee 
which will only have the trivial solution (ruling out that $\lambda=ia$ is an eigenvalue) provided that 
\be 
\label{cc1}
\Big(f_4-\frac{1}{2}f_1f_2-\frac{1}{2}f_1'\Big)(\xi) \leq 0,\quad  \text{for $\xi<0$.}
\ee 
	For, then, \eqref{impart} gives $w(0)=$, hence $w'(0)=0$, and so $w\equiv 0$ by solution of the
	Cauchy problem for the second-order interior equation.

We readily find
\be 
f_4-\frac{1}{2}f_1f_2-\frac{1}{2}f_1'=\frac{\left(\omega \phi(\bar{u}) +\phi_u(\bar{u})q\bar{z}-\omega \phi(\bar{u}) \bar{u}\right)\left(\omega \bar{u}-\omega +1\right)}{2\omega ^2{\left(\bar{u}-1\right)}^2}.
\ee 
Substituting $\bar{z}=(\omega\bar{u}^2 - 2\omega\bar{u} + 2q)/(2q)$ yields
\be 
f_4-\frac{1}{2}f_1f_2-\frac{1}{2}f_1'=\frac{\left(\omega \bar{u}-\omega +1\right)\left(\omega \bar{u}^2-2\omega \bar{u}+2q\right)}{4\omega ^2{\left(\bar{u}-1\right)}^2}\phi_u(\bar{u})-\frac{ \omega \bar{u}-\omega +1}{2\omega \left(\bar{u}-1\right)}\phi(\bar{u}). 
\ee 
Here, it is easy to see $\omega\bar{u}^2-2\omega\bar{u}+2q>\omega\bar{u}(-\infty)^2-2\omega\bar{u}(-\infty)+2q=0$. Hence, the condition \eqref{cc1} is equivalent to \eqref{ignitioncon}.
\end{proof}
\begin{remark}
The condition \eqref{ignitioncon} says that the rate of change of logarithm of the 
ignition function cannot be big. Moreover,  we find that $\frac{2\omega(u-1)}{\omega u^2-2\omega u+2\,q}$ is decreasing on $u\in( 1+\sqrt{1-2q/\omega},2]$; hence a sufficient condition for \eqref{ignitioncon} is 
\be 
\label{ignitioncon_sub}
\ln(\phi(u))'<\omega/q.
\ee 
\end{remark}

\smallskip

We are now ready to prove the main theorem of this paper by a homotopy argument. 
\begin{theorem}
\label{main}
The strong detonation waves of \eqref{detonation} corresponding to ignition functions satisfying 
\be 
\label{ignitionconori}
\frac{d}{du}\ln(\phi(u))\le \frac{2u-u_*-u_+}{(u-u_*)(u-u_+)+q(u_*+u_+)},\quad \text{for all $u\in (u_-,u_*]$}
\ee 
are all weakly spectral stable.
\end{theorem}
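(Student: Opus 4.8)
The plan is to verify condition \eqref{SS condition} by combining Lemmas \ref{simpleroot} and \ref{nopureim} with a homotopy (continuous-deformation) argument that reduces the general ignition function to the step-type case treated in \cite{JY}. The key point is that, by Lemma \ref{simpleroot}, $\lambda=0$ is always a simple root of $\Delta$, while by Lemma \ref{nopureim} there are no nonzero purely imaginary roots so long as the rescaled condition \eqref{ignitioncon} holds. Hence, if the number of roots of $\Delta$ in the open right half-plane $\{\Re\lambda>0\}$ can be shown invariant under a deformation of $\phi$ that preserves \eqref{ignitioncon}, that number may be evaluated at a convenient step-type endpoint, where \cite{JY} guarantees it vanishes.

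First I would recast the hypothesis \eqref{ignitionconori}, stated in the original variables, as the rescaled condition \eqref{ignitioncon}. Using the coordinate change of Section 2, with $\tilde u=(u-u_+)/(s-u_+)$, $\tilde q=q/(s-u_+)$, $\omega=(s-u_+)/s$, together with $u_*+u_+=2s$ and $\tilde u_*=2$, one computes $\frac{d}{d\tilde u}\ln\tilde\phi=(s-u_+)\frac{d}{du}\ln\phi$; dividing \eqref{ignitioncon} by the positive factor $s-u_+$ then turns its right-hand side into exactly that of \eqref{ignitionconori}, while the admissible range $\tilde u\in(1+\sqrt{1-2q/\omega},2]$ becomes $u\in(u_-,u_*]$. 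Thus \eqref{ignitionconori} is precisely \eqref{ignitioncon} in the original variables, and Lemma \ref{nopureim} applies. I would also record that the wave parameters $u_\pm,u_*,q,\omega$, and hence both the value $u_-=1+\sqrt{1-2q/\omega}$ and the profile range $(u_-,u_*]$, are fixed independently of the shape of $\phi$; this is what keeps the deformation below within a single class of waves.

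Next I would construct the homotopy. Let $\phi_s$ be the step-type ignition function sharing the ignition level $u_i<u_-$, and set $\ln\phi^\theta=(1-\theta)\ln\phi_s+\theta\ln\phi$ for $u>u_i$, so that $\phi^0=\phi_s$ and $\phi^1=\phi$. The right-hand side of \eqref{ignitioncon} is a fixed, $\phi$-independent function that is strictly positive on $(u_-,u_*]$ — its denominator vanishes at $u_-$ and is positive beyond, while its numerator is positive there — so on that range $\frac{d}{du}\ln\phi_s=0$ lies below it, and $\frac{d}{du}\ln\phi$ lies below it by hypothesis. Since the constraint bounds $\frac{d}{du}\ln\phi^\theta$ from above by this same function, and upper bounds are preserved under convex combinations, every $\phi^\theta$ satisfies \eqref{ignitioncon}, along with \eqref{ignitionfunction} and \eqref{con_existprofile}. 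Hence Lemmas \ref{simpleroot} and \ref{nopureim} hold uniformly in $\theta$, and the corresponding Evans-Lopatinsky determinant $\Delta^\theta(\lambda)$ is analytic in $\lambda$ and jointly continuous in $(\theta,\lambda)$. I would then apply the argument principle on the contour formed by the imaginary axis, indented into $\{\Re\lambda>0\}$ to avoid the fixed simple root at the origin, and closed by a large semicircle $|\lambda|=R$ in $\{\Re\lambda\ge 0\}$: the number $N(\theta)$ of roots enclosed in the open right half-plane equals a winding number of $\Delta^\theta$. By Lemma \ref{nopureim} the imaginary axis stays clear of roots away from the origin, so $N(\theta)$ is integer-valued and continuous, hence constant; evaluating at $\theta=0$, where \cite{JY} gives $N(0)=0$, forces $N(1)=0$, which is exactly \eqref{SS condition} for the given $\phi$.

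I expect the principal obstacle to be the uniform high-frequency control needed to make this count well defined: one must produce a radius $R$, independent of $\theta\in[0,1]$, such that $\Delta^\theta$ does not vanish on $\{\Re\lambda\ge 0,\ |\lambda|\ge R\}$, ensuring that no roots escape to or enter from infinity during the deformation and that $N(\theta)$ is locally constant. This is most cleanly obtained from the Liouville-reformulated equation \eqref{weq}, whose potential coefficients depend on $\phi^\theta$, $\phi^\theta_u$ and the fixed profile quantities and are therefore bounded uniformly in $\theta$ under \eqref{ignitioncon}; the energy identity \eqref{L2equation} then confines any right half-plane eigenvalue to a $\theta$-independent bounded set. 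A secondary point is the regularity of $\phi^\theta$ near $u_i$, where $\ln\phi$ may diverge to $-\infty$; this is harmless, since only the profile range $(u_-,u_*]$, on which each $\phi^\theta$ is bounded away from zero, enters the eigenvalue problem.
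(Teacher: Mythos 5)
Your proposal is correct and takes essentially the same route as the paper's own proof: the identical homotopy (your $\ln\phi^\theta=(1-\theta)\ln\phi_s+\theta\ln\phi$ is exactly the paper's $\phi(r,u)=\phi^r(u)\phi_0^{1-r}(u)$ with $r=\theta$), the same verification that \eqref{ignitioncon} is preserved along the deformation, the same use of Lemmas \ref{simpleroot} and \ref{nopureim} to keep roots off the imaginary axis, the endpoint stability result of \cite{JY}, and the same coordinate-change computation converting \eqref{ignitioncon} into \eqref{ignitionconori}. Your explicit argument-principle contour and the uniform-in-$\theta$ high-frequency bound merely make rigorous what the paper compresses into ``the unstable/stable eigenvalues cannot cross the imaginary axis,'' so they are refinements of, not departures from, its argument.
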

\begin{proof}
It has been verified in \cite{JY} that strong denotation waves of \eqref{detonation1} with step ignition function $\phi_0(u)$ are spectrally stable.\footnote{In version 2 of the paper, the authors corrected a minor issue in their paper published in Quarterly of Applied Mathematics. They now allow $u_+ \geq 0$
(not only $u_+=0$ in first version). This fix allows us to get spectral stability of strong detonation of equation \eqref{detonation1} with $\omega\in(0,1]$ and step ignition function.}
	Let $\phi(u)$ be an ignition function satisfying \eqref{ignitioncon} and define $\phi(r,u)=\phi^r(u)\phi_0^{1-r}(u)$, for $0\leq r\leq 1$. We have
\be 
\frac{d}{du}\ln(\phi(r,u))=r\frac{d}{du}\ln(\phi(u))+(1-r)\frac{d}{du}\ln(\phi_0(u))=r\frac{d}{du}\ln(\phi(u))\le\frac{2\omega(u-1)}{\omega u^2-2\omega u+2\,q},
\ee 
for $1+\sqrt{1-2q/\omega}<u\le 2$. That is, the family of function $\phi(r,u)$ parameterized by $r$ always satisfies \eqref{ignitioncon}. 
Hence when varying $r$ from $1$ to $0$, the unstable/stable eigenvalues cannot cross the imaginary axis by Lemma \ref{simpleroot} and Lemma \ref{nopureim}. Because there is no unstable eigenvalue for $r=0$, there must be no unstable eigenvalue for $r=1$ also. Writing condition \eqref{ignitioncon} back in original coordinates, we get condition \eqref{ignitionconori}.

\end{proof}
\begin{remark}
	For Arrhenius type ignition functions \cite{LZ} 
\be
\phi(u)=
\begin{cases}
Ce^{-\mathcal{E}/T(u)} & T>0, \\
0 & T \leq 0,
\end{cases}
\ee
investigated in \cite{BZ}, the condition \eqref{ignitioncon} becomes
\be 
\label{Tcondi}
\frac{\mathcal{E}T_u(u)}{T^2(u)}\le \frac{2\omega(u-1)}{\omega u^2-2\omega u+2q}=\frac{2(u-1)}{ u^2-2 u+2q/\omega},\quad \text{for $1+\sqrt{1-2q/\omega}<u\le 2$.}
\ee 
Specifying to the first choices of $T(u)$ in the numerical investigation in \cite{BZ}
$$
T_1(u)=1-(u-1.5)^2,
$$
our criterion \eqref{Tcondi} gives a curve on the $(q/\omega)-\mathcal{E}$ plane and validates spectral stability of points to the left of the curve. See figure \ref{fig1} (a). We also plot the points $\{q/\omega,\mathcal{E}\} = \{0.01 : 0.01 : 0.49\}\times\{0 : 0.1 :5, 5.2 : 0.2 : 10, 12, 15, 20, 30, 40\}$ studied in \cite{BZ} on figure \ref{fig1} (a). We see that most ($3963$ out of $3969$) of the points 
	studied by Barker and Zumbrun can be validated by criterion \eqref{Tcondi} as being spectrally stable. 
	There are six points $\{q/\omega, \mathcal{E}\} = \{0.49\}\times\{20\}, \{0.48, 0.49\}\times\{30\}, \{0.47, 0.48, 0.49\}\times\{40\}$ to the right of the curve, for which stability is not
	determined by \eqref{Tcondi}.
	The latter were among points
	for which Barker and Zumbrun reported numerical difficulties; however, redoing the computations with
	Matlab's stiff ODE solver ode15s appears to resolve these difficulties, yielding numerically observed
	stability.

Specifying to the second choices of $T(u)$ in the numerical investigation in \cite{BZ}
$$
T_2(u)=u,
$$
our criterion \eqref{Tcondi} gives a curve $\mathcal{E}=4\omega/q$ on the $(q/\omega)-\mathcal{E}$ plane and validates spectral stability of points to the left of the curve. See figure \ref{fig1} (b). We also plot the points $\{q/\omega,\mathcal{E}\} = \{0.01 : 0.01 : 0.37, 0.375, 0.38 : 0.01 : 0.49\}\times\{0 : 0.1 : 5, 5.2 : 0.2 : 10, 12, 15\} \cup \{0.01 : 0.01 : 0.37, 0.375, 0.38 : 0.01 : 0.47\}\times\{20\} \cup\{0.01 : 0.01 : 0.37, 0.375, 0.38 : 0.01 : 0.45\}\times\{25\} \cup  \{0.01 : 0.01 : 0.37, 0.375, 0.38 : 0.01 : 0.40\}\times\{30\} $ studied in \cite{BZ} on figure \ref{fig1} (b). We see that most ($3851$ out of $4035$) 
of the points studied by Barker and Zumbrun can be validated by criterion \eqref{Tcondi} 
as being spectrally stable. 
\end{remark}
\begin{figure}
    \centering
    \includegraphics[scale=0.3]{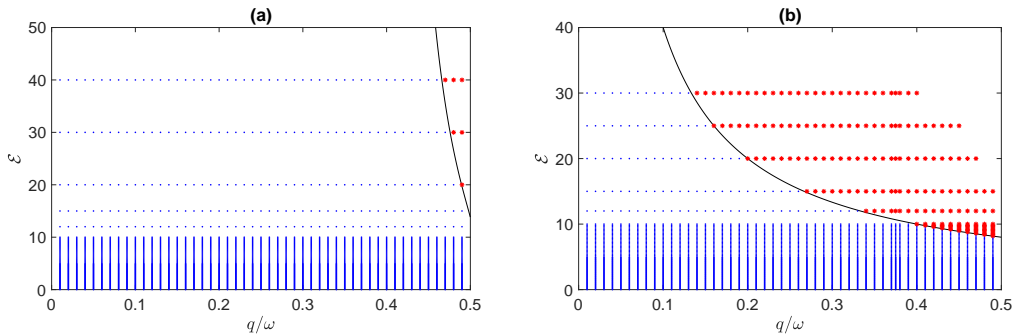}
    \caption{(a) The first choice of ignition function $T_1(u)=1-(u-1.5)^2$ in \cite{BZ}. (b) The second choice of ignition function $T_2(u)=u$ in \cite{BZ}. }
    \label{fig1}
\end{figure}

\section{Discussion and open problems}\label{discussion}
In the analyses of both \cite{SYZ} and the more general \cite{SZ}, a strict version of sign condition
\eqref{cc1} is assumed from the begining. Thus, the equivalent condition \eqref{ignitionconori}
obtained here is the strongest criterion that can be obtained by the methods of those papers.
However, evidently, this condition is not sharp. For, it is a closed condition, whereas the condition
of spectral stability is an open one, by continuity of spectra under perturbations in wave parameters.
Thus, waves close enough to a wave satisfying \eqref{ignitionconori} are stable even though they may not
satisfy \eqref{ignitionconori} themselves.
This perhaps sheds light on the extent to which one can push Sturm-Liouville methods in this context.
It would be very interesting of course to find alternative methods counting eigenvalues crossing the 
imaginary axis as well as the origin, generalizing \cite{SZ} and extending our results here to
more general choices of ignition function.

\end{document}